\newtheorem{thm}{Theorem}
\newtheorem{prop}{Proposition}[section]
\newtheorem{q}[thm]{Question}
\theoremstyle{remark}
\theoremstyle{definition}
\numberwithin{equation}{section}
\def\R{\mathbb{R}}
\def\C{\mathbb{C}}
\def \T {\mathbb T}
\def\l{\lambda}
\def\z{\mathfrak{z}}
\def \t {\mathfrak t}
\def\g{{\mathfrak g}}
\def\h{{\mathfrak h}}
\def\cF{{\mathcal F}}
\def\cH{{\mathcal H}}
\def\cR{{\mathcal R}}
\begin{document}

\title[Isometry group of  Sasaki-Einstein metric]{ Isometry group of  Sasaki-Einstein metric}

\author{Weiyong He}

\address{Department of Mathematics, University of Oregon, Eugene, Oregon, 97403}
\email{whe@uoregon.edu}

\maketitle

Let $(M, g)$ be a Sasaki-Einstein manifold of dimension $2n+1$; equivalently its Kahler cone is a Kahler-Ricci flat cone.  Let $(X, J)$  be the underlying affine variety of its  Kahler cone and denote $\text{Aut}(X, J)$ to be its automorphism group; denote $\text{Aut}_0(X, J)$ to be the identity component of $\text{Aut}(X, J)$. We prove the following result in this short paper,

\begin{thm}\label{T-main}The identity component of the holomorphic isometry group of $(M, g)$ is  the identity component of a maximal compact subgroup of $\text{Aut}(X, J)$.  
\end{thm}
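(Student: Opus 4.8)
\medskip
\noindent\textit{Proposed approach.} Write $G$ for the identity component of the holomorphic isometry group of $(M,g)$. Let $(C(M)=\R_{>0}\times M,\ \bar g=dr^2+r^2g,\ J)$ be the Kahler cone, with Euler field $\cE=r\p_r$ and Reeb field $\xi=J\cE$, so that $C(M)=X\setminus\{o\}$ with $o$ the apex. The plan is to prove the two inclusions $G\subseteq K_0$ and $\dim K_0\le\dim G$, where $K$ is a maximal compact subgroup of $\Aut(X,J)$ containing $G$; together they give $G=K_0$.

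The first inclusion is soft. Every isometry of $(M,g)$ extends, by the identity in $r$, to an isometry of $(C(M),\bar g)$, and this extension is a biholomorphism exactly when the original map is a holomorphic isometry; it then extends across the apex to an element of $\Aut(X,J)$. This identifies $G$ with a connected subgroup of $\Aut(X,J)$, compact because it is closed in the isometry group of the compact manifold $(M,g)$, so $G\subseteq K_0$ for some maximal compact $K$. One must first know that $\Aut(X,J)$, though possibly infinite dimensional (already for $X=\C^{n+1}$), admits maximal compact subgroups and that they are compact Lie groups: a compact group acting holomorphically on the affine cone $X$ has a fixed point, which one arranges to be $o$, and hence, by Bochner's linearization theorem, is conjugate to a subgroup of a unitary group.

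The second inclusion proceeds in two stages. Stage one: $K$ fixes $\xi$ under its adjoint action, hence preserves the Reeb foliation $\cF_\xi$ and the transverse holomorphic structure $\bar J$, i.e. $K\subseteq\Aut(\cF_\xi,\bar J)$. This is where the Einstein condition is used. Since $\T_\xi\subseteq G\subseteq K$, enlarge $\T_\xi$ to a maximal torus $\T$ of $K$ and average $\bar g$ over $\T$; because $\T$ commutes with $\T_\xi$ it preserves $\xi$ and the Reeb $\C^*$-action, so the average is again a Kahler cone metric with Reeb field $\xi$, exhibiting $\xi$ in the Reeb cone $\Sigma\subseteq\mathrm{Lie}(\T)$. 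By Martelli-Sparks-Yau and Futaki-Ono-Wang the volume functional on $\Sigma$ is strictly convex with $\xi$ as its unique critical point; it is invariant under the Weyl group $W=N_K(\T)/\T$ acting on $\mathrm{Lie}(\T)$, so $\xi$ is $W$-fixed, and since $\mathrm{Lie}(\T)^W$ is the centre of $\mathrm{Lie}(K)$, the field $\xi$ is central and $\T_\xi$ is central in $K$. Stage two is the transverse analogue of the theorems of Matsushima and Bando-Mabuchi: the transverse Kahler metric $g^T$ is transverse Kahler-Einstein, the group $\Aut(\cF_\xi,\bar J)$ is reductive with maximal compact $G$, and the transverse Kahler-Einstein metric is unique in its basic class up to $\Aut(\cF_\xi,\bar J)$; moreover prescribing the transverse Kahler form within its basic class pins down the whole Sasakian structure, so this is genuinely the relevant moduli. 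Hence the space of transverse Kahler-Einstein metrics is $\Aut_0(\cF_\xi,\bar J)/G$, a complete, simply connected manifold of nonpositive curvature; the compact group $K\subseteq\Aut(\cF_\xi,\bar J)$ acts on it by left translation and, by the Cartan fixed point theorem, fixes some coset $\psi G$. Then $\psi^{-1}K\psi\subseteq G$, so $\dim K_0=\dim K\le\dim G$; combined with $G\subseteq K_0$, this gives $G=K_0$.

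The main obstacle I anticipate is Stage one together with the bookkeeping around it: the automorphism group of the affine cone is in general far larger than the transverse automorphism group, and one must genuinely exploit the variational characterization of the Sasaki-Einstein Reeb field (and the identification of the symmetries of the volume functional) to force a maximal compact subgroup into $\Aut(\cF_\xi,\bar J)$. Stage two rests on the reductivity of $\Aut(\cF_\xi,\bar J)$ and a Bando-Mabuchi-type uniqueness for the transverse Kahler-Einstein metric, which carry the analytic content and would be invoked or proved as needed. A further technical wrinkle is the treatment of compact subgroups of the possibly infinite-dimensional group $\Aut(X,J)$, handled via Bochner linearization near the apex.
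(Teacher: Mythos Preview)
Your proposal is correct in outline, but the route is genuinely different from the paper's, and the differences are worth recording.

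For the centrality of $\xi$ in $\mathrm{Lie}(K)$ (your Stage one, the paper's Proposition~\ref{P-1}), you use a Weyl-group argument: the Reeb cone in $\mathrm{Lie}(\T)$ is $W$-invariant because it is characterised algebraically by positivity of the $\T$-weights on the coordinate ring of $X$, the volume functional is $W$-invariant since $W\subset\Aut(X,J)$, and strict convexity forces the unique critical point $\xi$ into $\mathrm{Lie}(\T)^W=\z$. The paper instead approximates the minimiser $\xi_*$ of $V|_{\z}$ by quasi-regular $\xi_n\in\z$, passes to the quotient orbifold, and uses vanishing of the Futaki invariant on the semisimple part to kill $dV_{\xi_n}$ in the $\t'$-directions; a limiting argument then gives $\xi_*=\xi$. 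Your argument is shorter and avoids the quasi-regular approximation, though it tacitly uses that the Reeb cone inside a fixed maximal torus is unique and $W$-stable; the paper flags a related uncertainty (for discrete $\Gamma\not\subset K_0$) in its closing discussion, but for the connected Weyl group the algebraic description of the Reeb cone makes this unproblematic.

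For Stage two the approaches diverge more sharply. You work directly in the possibly irregular transverse geometry: $K_0$ commutes with $\xi$, hence acts on $(\cF_\xi,\bar J)$; invoking a transverse Matsushima/Bando--Mabuchi package you realise the space of transverse K\"ahler--Einstein metrics as the symmetric space $\Aut_0(\cF_\xi,\bar J)/G$ and apply Cartan's fixed-point theorem to conjugate $K_0$ into $G$. The paper instead \emph{reduces to the quasi-regular case}: it deforms $\xi$ to nearby quasi-regular $\xi_n\in\z$, produces Sasaki--Ricci solitons $g_n$ with Reeb field $\xi_n$ by an implicit-function argument (carried out $G$-equivariantly), and uses Grove--Karcher--Ruh stability to guarantee that $G$ is still the identity component of the isometry group of $g_n$. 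One is then on an orbifold quotient $Z=M/\cF_{\xi_n}$ with a K\"ahler--Ricci soliton, where Tian--Zhu's theorem gives $K_0=G$ directly.

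What each buys: your route is conceptually cleaner and never leaves the given Reeb field, but it leans on the irregular transverse Bando--Mabuchi uniqueness and reductivity statements, which are available in the literature but carry their own analytic weight; in fact reductivity (Matsushima) alone already suffices, since any compact subgroup of a reductive group is conjugate into a maximal compact, so your Cartan fixed-point step could be replaced by this. The paper's route sidesteps irregular transverse analysis entirely by pushing everything to a nearby quasi-regular structure and quoting the K\"ahler (orbifold) theory, at the cost of the deformation argument and the Grove--Karcher--Ruh input. Both are valid proofs; yours is closer in spirit to a direct Sasaki Matsushima theorem, the paper's to a perturbation-to-the-known-case strategy.
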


This answers a conjecture proposed in Martelli-Sparks-Yau \cite{MSY} about the holomorphic isometry group of a Sasaki-Einstein metric; when a Sasaki-Einstein metric is \emph{quasiregular}, 
this is proved in Martelli-Sparks-Yau (\cite{MSY}, Section 4.3). The statement itself can be viewed as a generalization of Mastushima's theorem \cite{Matsu} on a Kahler-Einstein metric on a Fano manifold, which asserts that  the identity component of the isometry group of a Kahler-Einstein metric on a Fano manifold is the identity component of  a maximal compact subgroup of its automorphism group. Unlike Fano case, a killing vector field of a Sasaki-Einstein metric does not have to be holomorphic; hence we can only assert the conclusion about holomorphic isometry group. A typical example is the odd dimensional $(2n+1)$ round sphere whose identity component of isometry group is $SO(2n+2)$, but the holomorphic isometry group is $U(n+1)$.  By a general result on Sasaki manifolds (see Theorem 8.18, Corollary 8.19 in \cite{BG}), a Killing vector field of a Sasaki-Einstein metric is  (real) holomorphic unless on a round sphere or a 3-Sasaki structure (its Kahler cone is a hyper-Kahler con and this is the counterpart of hyper-Kahler structure; it is always quasi-regular). Hence except these two special cases, the holomorphic condition  in Theorem \ref{T-main} can be dropped. 

In this note we shall prove Theorem \ref{T-main} when the Sasaki metric $(M, g)$ is \emph{irregular}. 
Given a Sasaki metric $(M, g)$, its \emph{Reeb vector field} $\xi$ is a holomorphic Killing vector field of $(X, J, \bar g)$, where $\bar g$ is the Kahler cone metric. We fix a maximal torus $\T^k\subset Aut_0(X, J)$ such that its Lie algebra $\t$ contains $\xi$; we can assume that the dimension $k$ of $\T^k$ is at least two without loss of generality (this is the case when $\xi$ is irregular for example). Let $K$ be a maximal compact subgroup of $\text{Aut}(X, J)$ containing $\T^k$ and we denote its Lie algebra as $\h=Lie(K)$. The starting point is that the Reeb vector field is in the center of $\h$, as in quasi-regular case  \cite{MSY}.

\begin{prop}\label{P-1}The Reeb vector field $\xi$ of Sasaki-Einstein metric $(M, g)$ is in the center of $\h=Lie(K)$. 
\end{prop}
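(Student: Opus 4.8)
The plan is to use that the Reeb vector field of a Sasaki-Einstein metric is \emph{canonically} attached, inside the fixed torus $\T^k$, to the underlying complex cone $(X,J)$, and then to exploit the equivariance of this construction under the Weyl group of $K$.

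First I would recall the variational picture of Martelli-Sparks-Yau \cite{MSY}. Inside $\t=\text{Lie}(\T^k)$ one has the open convex \emph{Reeb cone} $\t_+$ of all $\eta$ occurring as the Reeb field of some Kahler cone metric on $(X,J)$, and on the affine slice $\cS=\{\eta\in\t_+:\cL_\eta\Omega=\i(n+1)\Omega\}$ --- where $\Omega$ is the holomorphic volume form of the Ricci-flat cone --- there is the Einstein-Hilbert volume functional, built solely from the equivariant complex geometry of $(X,J)$ together with the $\T^k$-action; it is strictly convex and proper, hence has a unique critical point, which is exactly the Reeb field $\xi$ of the given Sasaki-Einstein metric. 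Thus $\xi$ is the \emph{unique} element of $\t_+$ that is the Reeb field of a Ricci-flat Kahler cone metric on $(X,J)$.

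Next I would take any $w\in N_K(\T^k)$ and regard it as a biholomorphism of $X$. Then $w^*\bar g$ is again a Kahler cone metric on $(X,J)$, Ricci-flat because Ricci-flatness is a biholomorphic invariant, and its Reeb field is $\text{Ad}(w^{-1})\xi\in\t$ since $w$ normalizes $\T^k$. The cone $\t_+$, the slice $\cS$, and the volume functional are all natural under $\text{Aut}(X,J)$ --- for $\cS$ because $w^*\Omega$ is a scalar multiple of $\Omega$, so $\cL_{\text{Ad}(w^{-1})\xi}\Omega=\i(n+1)\Omega$ --- so $\text{Ad}(w^{-1})\xi$ is again a critical point on $\cS$, whence $\text{Ad}(w^{-1})\xi=\xi$ by uniqueness. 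Therefore $\xi$ is fixed by the Weyl group $W=N_K(\T^k)/\T^k$; since $\t^W$ is precisely the center $\z$ of $\h=\text{Lie}(K)$ (on the semisimple part of $\h$ the roots span the dual Cartan, so $0$ is the only Weyl-invariant vector there), this gives $\xi\in\z$.

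The one substantial ingredient, and the place I expect the real work to sit, is the uniqueness of the Sasaki-Einstein Reeb field inside $\t_+$, which is exactly the strict convexity of the Martelli-Sparks-Yau volume functional; granting that, the rest is the naturality of the construction under holomorphic automorphisms and the elementary identity $\t^W=\z$. A secondary point to check is that the charge normalization cutting out $\cS$ is preserved by $N_K(\T^k)$, but this is immediate because $\Omega$ is an eigenvector of every element of $\text{Aut}(X,J)$.
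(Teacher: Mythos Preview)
Your proof is correct and takes a genuinely different route from the paper's. The paper writes $\t=\z\oplus\t'$ and argues that the first variation $dV$ vanishes in the $\t'$-direction: it approximates the minimizer $\xi_*$ in $\z$ by quasi-regular Reeb fields $\xi_n\in\z$, passes to the quotient orbifold $Z=M/\cF_{\xi_n}$, identifies $dV_{\xi_n}$ with the Futaki invariant of $Z$, and invokes the classical fact that the Futaki invariant vanishes on the semisimple part of $\text{aut}(Z)$; a limit in $n$ then gives the vanishing at $\xi_*$, and uniqueness of the critical point on the full slice forces $\xi=\xi_*\in\z$. You sidestep all of this by observing that the Reeb cone, the normalizing hyperplane, and the volume functional are natural under biholomorphisms of $(X,J)$ normalizing $\T^k$, so the unique minimizer must be Weyl-fixed and hence lies in $\t^W=\z$. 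This is cleaner --- no quasi-regular approximation, no quotient orbifold, no Futaki invariant --- and makes the statement manifestly a symmetry fact; the paper's detour, on the other hand, records the vanishing of the transverse Futaki invariant on $\t'$ at nearby quasi-regular Reeb fields, which is of independent interest. One small point worth making explicit in your write-up is that $\text{Ad}(w^{-1})$ carries the Reeb cone to itself (so that $\xi$ and $\text{Ad}(w^{-1})\xi$ sit in the \emph{same} convex slice where strict convexity gives uniqueness); this follows from the description of $\t_+$ as the interior of the dual of the moment cone, an object intrinsic to $(X,J,\T^k)$ and hence preserved by $N_K(\T^k)$.
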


\begin{proof} Let $\z$ be the center of of $\h$. And we can then write $\t=\z\oplus \t^{'}$. The Reeb vector fields form a convex subset of $\t$, called \emph{Reeb cone} and denoted by $\cR$. As in \cite{HS}, we shall mainly interested in the \emph{normalized Reeb vector fields} which lie in a hyperplane $\cH$ in $\t$ and we denote it as $\cR^{'}=\cR\cap \cH$.  In \cite{MSY} (see \cite{FOW} for expository), it was proved that the volume functional $V: \cR^{'}\rightarrow \R$ of a Sasaki structure depending only on the Reeb vector fields, and it is a convex functional in $\cR^{'}$; moreover the Reeb vector field $\xi$ of a Sasaki-Einstein metric has to be the (unique) critical point of the volume functional. Actually it was proved further that the volume functional is actually proper in $\cR^{'}$ and hence such a minimizer always exists \cite{HS}. Clearly we can restrict our discussion on $\z$ and there is a unique minimizer, denoted as $\xi_{*}$ of the volume functional when it is restricted to the normalized Reeb cone contained in $\z$. It remains to show that $\xi=\xi_{*}$. When $\xi_*$ is quasi-regular, this is proved in \cite{MSY}. Hence we assume $\xi_*$ is irregular and hence $\text{dim}\;\z\geq 2$. We can choose a sequence normalized Reeb vector fields $\{\xi_n\}$ in $\z$ such that $\xi_n\rightarrow \xi$ by a result of Rukimbira;  moreover each $\xi_n$ can be taken as quasi-regular (see \cite{R} or Theorem 7.1.10 \cite{BG}). Now for any $\zeta\in \t^{'}$, we suppose $\zeta$ satisfies the normalized condition such that for any  normalized Reeb vector field $\tilde \xi$, $\tilde \xi+t\zeta$ is still a normalized Reeb vector field for (small) real number $t$. We then consider the volume functional $v(t)=V(\xi_n+t\zeta)$. We claim that $V(\xi_n)\leq V(\xi_n+t\zeta)$ for small $t$. Clearly $v(t)$ is a convex function of $t$ and we only need to show that $v^{'}(0)=0$. 
Since $\xi_n$ is quasi-regular and we can consider the quotient orbifold $Z=M/\cF_{\xi_n}$. Then $\t^{'}$ descends to a Lie subalgebra of $\text{aut}_\R(Z)$. Recall now the variation of the volume functional $dV$ coincides with the Futaki invariant (up to a multiplication of a constant). Now recall that the Futaki invariant $F_\C: \text{aut}(Z)\rightarrow \C$ is only nontrivial on the center of $ \text{aut}(Z)$ and in particular  it vanishes on the complexification of $\t^{'}$. Hence it follows that $dV_{\xi_n}(\zeta)=v^{'}(0)=0$ and the claim $V(\xi_n)\leq V(\xi_n+t\zeta)$ is proved. By  the smoothness of volume functional on Reeb vector fields, we know that
$V(\xi_*)\leq V(\xi_*+t\zeta)$ for any normalized $\zeta\in t^{'}$ and small $t$. It follows that $dV_{\xi_*}(\zeta)=0$ for any $\zeta\in \t^{'}$. It follows that $\xi_*$ is also a critical point of $V$ in $\cR^{'}$ (hence minimizer of $V$). By the uniqueness of minimizer in $\cR^{'}$, $\xi_*=\xi$. 
\end{proof}

Now we suppose $\xi\in \z$ and $\text{dim}(\z)\geq 2$. Let $G$ be the identity component of the isometric group of $(M, g)$ with Lie algebra $\g$;  clearly $\xi$ is also in the center of $\g$.  Now we can choose a sequence of normalized Reeb vector fields $\xi_n$ which are quasi-regular and lie in $\z$ and the center of $\g$. When $n$ is sufficiently large, then we have the following,
\begin{prop}\label{P-2}For $\xi_n$, there exists a Sasaki-Ricci soliton $g_n$ such that its underlying Kahler cone is $(X, J)$ and its identity component of the isometric group is still $G$. 
\end{prop}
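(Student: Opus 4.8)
The plan is to pass to the Kähler quotient of $(M,\xi_n)$ and solve a Kähler--Ricci soliton equation there. Since $\xi_n$ is quasi-regular, the leaf space $Z_n:=M/\cF_{\xi_n}$ is a compact Fano orbifold with at worst cyclic quotient singularities, $X\setminus\{o\}$ is the total space of an orbifold $\C^*$-bundle over $Z_n$ (the $\C^*$ generated by $\xi_n$ and $J\xi_n$), and the transverse Kähler geometry of any Sasaki structure on $(M,\xi_n)$ is exactly the Kähler geometry of $Z_n$ in $2\pi c_1^{\mathrm{orb}}(Z_n)$; in particular a Sasaki--Ricci soliton on $(M,\xi_n)$ with Kähler cone $(X,J)$ is the same datum as a Kähler--Ricci soliton on $Z_n$. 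Because $\xi_n$ lies in the centre of $\g$, the group $G$ commutes with the flow of $\xi_n$ and hence descends to a group of biholomorphisms of $Z_n$; conversely, outside the round-sphere and $3$-Sasakian cases already excluded, the transverse-holomorphic Killing fields of a Sasaki metric on $(M,\xi_n)$ are precisely those of $Z_n$ together with $\xi_n$. Thus it suffices to produce a Kähler--Ricci soliton on $Z_n$ whose identity component of isometry group is the image of $G$.

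First I would fix the soliton vector field: by the Tian--Zhu prescription it is the unique element $v_n$ of the torus $\t/\langle\xi_n\rangle$ making the modified Futaki invariant $F_{v_n}$ of $Z_n$ vanish on all of $\t/\langle\xi_n\rangle$. Since the (modified) Futaki invariant is supported on the centre of $\text{aut}(Z_n)$ and annihilates $\t^{'}$ (the computation used in the proof of Proposition \ref{P-1}), $v_n$ may be taken in $\z$, hence is $G$-invariant; and since the transverse Futaki invariant of $(M,\xi)$ vanishes (it equals $dV_\xi=0$), continuity of the Tian--Zhu minimizer in the Reeb parameter gives $v_n\to 0$ as $n\to\infty$. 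The existence of the soliton I would deduce from stability: $(X,J)$ admits a Ricci-flat Kähler cone metric, so $(X,J,\xi)$ is K-polystable in the sense of Collins--Sz\'ekelyhidi; K-polystability of the affine cone is open under small perturbations of the Reeb field inside the Reeb cone, and near $\xi$ it passes to $v_n$-modified K-polystability of $(X,J,\xi_n)$, whence a Sasaki--Ricci soliton $g_n$ with soliton field $v_n$ by the Yau--Tian--Donaldson type existence theorem for Sasaki--Ricci solitons (equivalently, $g_n$ arises as the limit of the modified Kähler--Ricci flow on $Z_n$, which does not degenerate because $Z_n$ is modified-K-polystable). Every ingredient above---$J$, $\xi_n$, the reference transverse Kähler class, $v_n$, and the functional being minimized---is $G$-invariant, so the construction is $G$-equivariant and $g_n$ is $G$-invariant; hence $G\subseteq \text{Isom}_0(M,g_n)$.

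For the reverse inclusion I would use convergence. As $\xi_n\to\xi$ and $v_n\to 0$, the solitons $g_n$, regarded as metrics on the fixed manifold $M$, converge in $C^\infty$ to $g$: uniform a priori estimates for Sasaki--Ricci solitons on $(M,\xi_n)$ give subconvergence, and uniqueness of the Sasaki--Einstein metric on $(X,J,\xi)$ identifies the limit as $g$. By the semicontinuity of isometry groups under $C^\infty$ convergence (Myers--Steenrod), $\text{Isom}_0(M,g_n)$ is, for all large $n$, isomorphic to a subgroup of $\text{Isom}_0(M,g)=G$; combined with $G\subseteq\text{Isom}_0(M,g_n)$ and a dimension count this forces $\text{Isom}_0(M,g_n)=G$. (One sees equally, from the Matsushima-type rigidity for Kähler--Ricci solitons together with $\xi_n,v_n\in\z$, that $\text{Isom}_0(M,g_n)$ is the identity component of the fixed maximal compact $K$, which is consistent with---and, through the convergence above, yields---Theorem \ref{T-main}.)

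The hard part is the existence step. A Fano orbifold need not carry any Kähler--Ricci soliton, so one must establish carefully (i) that K-polystability of the affine cone $(X,J)$ persists under small deformations of the Reeb field, (ii) that near the Einstein point it upgrades to $v_n$-modified K-polystability, and (iii) the corresponding Yau--Tian--Donaldson existence (or convergence of the modified Kähler--Ricci flow) in the orbifold/Sasakian category---all carried out $G$-equivariantly. The uniform $C^\infty$ estimates underlying the convergence $g_n\to g$ used in the last step are a second, more technical, point.
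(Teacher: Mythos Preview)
Your reverse-inclusion argument is essentially the paper's: it invokes Grove--Karcher--Ruh to embed $\text{Isom}_0(M,g_n)$ into $\text{Isom}_0(M,g)=G$ once $g_n$ is $C^\infty$-close to $g$, and then a dimension count finishes. So that half is fine.

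The existence step, however, is where your approach diverges from the paper and where there is a genuine gap. You propose to obtain $g_n$ by (i) showing K-polystability of $(X,J,\xi)$ is open under Reeb perturbation, (ii) upgrading this to $v_n$-modified K-polystability of $(X,J,\xi_n)$, and (iii) invoking a Yau--Tian--Donaldson existence theorem (or flow convergence) for Sasaki--Ricci solitons on orbifolds. None of (i)--(iii) is available off the shelf, and (i) in particular is not a general fact: K-polystability is not an open condition, and you have not explained what special feature of this situation would make it so here. You yourself flag these as ``the hard part''; in fact they are harder than the proposition you are trying to prove.

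The paper avoids all of this. Since $(M,g,\xi)$ is already a Sasaki--Einstein metric (hence a Sasaki--Ricci soliton with trivial soliton field), one can run a $G$-equivariant implicit function theorem on the Sasaki--Ricci soliton equation, deforming the Reeb vector field from $\xi$ to nearby $\xi_n$ while keeping the K\"ahler cone $(X,J)$ fixed. The paper cites Theorem~4.1 of \cite{HS}, where this local deformation is carried out $\T$-equivariantly; because $\xi$ and $\xi_n$ lie in the center of $\g$, the identical argument goes through with $\T$ replaced by $G$, producing $G$-invariant solitons $g_n$ close to $g$. This gives existence, $G$-invariance, and the $C^\infty$ closeness needed for the Grove--Karcher--Ruh step in one stroke, with no appeal to K-stability, no a~priori estimates, and no flow.

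So the correct fix is not to shore up (i)--(iii) but to replace your global existence machinery with the local perturbation argument around the given Sasaki--Einstein metric.
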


\begin{proof}This is really just the local deformation of Sasaki-Ricci solitons with Kahler cone fixed while with Reeb vector fields varied. The existence of such Sasaki-Ricci solitons follows from an argument of implicit function theory (in an $G$-invariant way). The argument of Theorem 4.1 (\cite{HS}) proves such a local deformation theory in an $\T$-invariant way; since $\xi$ and $\xi_n$ are all in the center of $\g$, the same argument of Theorem 4.1 still applies with the maximal torus replaced by $G$. In particular, the isometry group of $(M, g_n)$ contains $G$. Now by a general theorem of Grove-Kratcher-Ruh \cite{GrKaRu}, we know that when $n$ large enough, there is an inclusion, up to conjugation,  of  isometry group of $(M, g_n)$ into the isometry group $G$ of $(M, g)$ (see Lemma 8.2 \cite{HS} for example). It follows that the isometry group of $(M, g_n)$ also has identity component $G$, up to conjugation. 
\end{proof}

Hence we only need to prove that the identity component of isometry group $G$ of $(M, g_n, \xi_n)$ is the identity component of a maximal compact subgroup of $\text{Aut}(X, J)$, for sufficiently large $n$. This is a Calabi type theorem \cite{Calabi85} and it  proved by Tian-Zhu \cite{Tian-Zhu2} for Kahler-Ricci solitons on Fano manifolds. 
\begin{thm}[Tian-Zhu]Suppose $(M, g, J)$ is Kahler-Ricci soliton on a Fano manifold $(M, J)$. Then the identity component of the isometry group of $(M, g)$ is  a maximal compact group of the identity component of $Aut(M, J)$.
\end{thm}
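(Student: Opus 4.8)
The plan is to run the Calabi--Matsushima argument in the drift-twisted form adapted to Kähler--Ricci solitons, and then to use the uniqueness of such solitons to pass from a statement about a centralizer to one about all of $\text{Aut}_0(M,J)$. Normalize the soliton equation as $\text{Ric}(\omega)=\omega+\i\p\bp h$, $X=\nabla^{1,0}_\omega h$, with $\int_M e^{h}\omega^n=\int_M\omega^n$; then $W:=J\nabla h$ is a holomorphic Killing field of $(M,g)$ (its holomorphy potential $\i h$ is purely imaginary), whose flow has closure a torus $\T_W\subseteq G:=\text{Isom}_0(M,g)$. Since $M$ is Fano, $H^1(M,\cO_M)=0$, so $\iota_v\omega=\bp\theta_v$ is solvable for every holomorphic vector field $v$, and $v\mapsto\theta_v$ (normalized by $\int_M\theta_v e^{h}\omega^n=0$) identifies $\text{aut}(M,J)$ with a finite-dimensional space $\L$ of smooth functions.

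First I would record the elementary fact that every holomorphic Killing field $v$ of $(M,g)$ commutes with $X$: such a $v$ preserves the Riemannian volume $\omega^n$ and the Ricci form, hence annihilates the normalized Ricci potential $h$ (a $v$-invariant function with zero average is zero), and then $[v,X]=[v,\nabla^{1,0}h]=\nabla^{1,0}(vh)=0$. Thus $\mathfrak k:=\text{Lie}(G)$ centralizes $X$, so $G\subseteq Z_{\text{Aut}_0(M,J)}(\T_W)=:G^X$. The technical core is then a weighted Matsushima decomposition: introduce the drift Laplacian $\Delta_h u=\Delta u-\langle\nabla h,\nabla u\rangle$, self-adjoint for the measure $e^{h}\omega^n$, and run the Bochner--Weitzenböck argument with this weight. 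One shows that $\L$ decomposes into $\Delta_h$-eigenspaces, that the eigenvalue $1$ occurs with eigenspace exactly the $\T_W$-invariant holomorphy potentials, and that within it the real eigenfunctions span the Killing potentials over $\R$. This yields $\mathfrak g^X=\mathfrak k\oplus J\mathfrak k$; in particular $\mathfrak g^X$ is reductive, $G^X$ is a complexification of $G$, and $G$ is a maximal compact subgroup of $G^X$. (Tian--Zhu in fact obtain the full weight decomposition $\text{aut}(M,J)=\mathfrak g^X\oplus\bigoplus_{\lambda\neq 0}\text{aut}_\lambda$ under $\mathrm{ad}(X)$, which also powers their uniqueness theorem.)

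It remains to upgrade ``maximal compact in $G^X$'' to ``maximal compact in $\text{Aut}_0(M,J)$''. Let $K'\supseteq G$ be a maximal compact subgroup of $\text{Aut}_0(M,J)$. Averaging $\omega$ over $K'$ gives a $K'$-invariant Kähler metric in $c_1(M)$; running the normalized Kähler--Ricci flow from it, the flow is canonical, hence $K'$-equivariant, and (solitons existing on $M$) converges to a Kähler--Ricci soliton, which by Tian--Zhu's uniqueness theorem equals $\sigma^*\omega$ for some $\sigma\in\text{Aut}_0(M,J)$. Hence $K'\subseteq\text{Isom}_0(M,\sigma^*\omega)=\sigma^{-1}G\sigma$, so $\dim K'\le\dim G\le\dim K'$; since $G\subseteq K'$ and both are connected, $K'=G$, which is the claim. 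The main obstacle I expect is the weighted Matsushima step: one must carry the exponential weight $e^{h}$ through the Bochner identity and verify that the relevant first eigenvalue of $\Delta_h$ equals $1$ with eigenspace precisely the $\T_W$-invariant holomorphy potentials --- the positivity needed here is exactly where the soliton equation (rather than a general Kähler metric) enters --- together with the bookkeeping matching real eigenfunctions to Killing fields. A secondary point is the equivariance and convergence of the flow in the last step, and recalling that on a Fano manifold $G=\text{Isom}_0(M,g)$ does consist of holomorphic transformations.
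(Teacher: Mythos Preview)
The paper does not prove this statement itself: it is quoted as a theorem of Tian--Zhu and then invoked (via its orbifold adaptation) in Proposition~\ref{P-3}. So the comparison is really with Tian--Zhu's original argument.

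Your weighted-Matsushima core --- the drift Laplacian $\Delta_h$ on holomorphy potentials, the soliton equation pinning the bottom eigenvalue at $1$, and the identification of the real $1$-eigenspace with Killing potentials to get $\mathfrak g^X=\mathfrak k\oplus J\mathfrak k$ --- is exactly Tian--Zhu's mechanism. Where you diverge is the final step, upgrading ``maximal compact in $G^X$'' to ``maximal compact in $\text{Aut}_0(M,J)$''. Tian--Zhu read this off directly from the weight decomposition you mention only parenthetically: the same weighted Bochner positivity that fixes the first eigenvalue also forces all $\mathrm{ad}(X)$-weights on $\text{aut}(M,J)$ to be \emph{nonnegative}, so $\bigoplus_{\lambda>0}\text{aut}_\lambda$ is the nilradical and $\mathfrak g^X$ is a genuine Levi factor; every compact subgroup of $\text{Aut}_0(M,J)$ is then conjugate into $G^X$, and you are done. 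Your route through convergence of the normalized K\"ahler--Ricci flow is not wrong in outline, but it invokes a theorem that is both substantially deeper and chronologically later than the statement being proved, and whose available proofs themselves lean on this very structure theorem and on Tian--Zhu uniqueness --- so as written there is a real danger of circularity. Use the Levi decomposition instead; it is self-contained and is what Tian--Zhu actually do.
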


By a direct adaption of Tian-Zhu's argument,  we have
\begin{prop}\label{P-3}For quasi-regular Sasaki-Ricci solitons $(M, g_n, \xi_n)$, the identity component of its isometry group is the identity component of a maximal compact subgroup of $\text{Aut}(X, J)$.  
\end{prop}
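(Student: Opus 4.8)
The plan is to reduce to the quotient orbifold and invoke Tian--Zhu's theorem there. Since $\xi_n$ is quasi-regular, the leaf space $Z=M/\cF_{\xi_n}$ is a compact K\"ahler orbifold, and the Sasaki--Ricci soliton $(g_n,\xi_n)$ descends to an orbifold K\"ahler--Ricci soliton $(\omega_Z,Y)$ on $Z$, where $Y$ is the holomorphic soliton vector field obtained from the transverse holomorphic data. The (suitably normalized) soliton equation forces $c_1^{\mathrm{orb}}(Z)>0$, so $Z$ is a Fano orbifold. Moreover the K\"ahler cone $(X,J)$ is identified with the complement of the zero section in the total space of an orbifold line bundle $L$ over $Z$ (an appropriate root of $K_Z^{-1}$), so that $X=\mathrm{Spec}\bigoplus_{k\ge 0}H^0(Z,L^{k})$ and the $\C^\ast$-action generated by $\xi_n-\i J\xi_n$ acts with weight $k$ on the $k$-th summand.

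Next I would match the relevant groups. Since the only fibrewise automorphisms of $L$ over $\mathrm{id}_Z$ are the scalars (as $Z$ is compact and connected), there is a central extension $1\to\C^\ast\to\Aut_0(X,J)\to\Aut_0(Z)\to 1$ whose kernel is the complexified Reeb torus. Because $\xi_n$ lies in the center of $\h$ (and of $\g$), a maximal compact subgroup $K'$ of $\Aut_0(X,J)$ may be taken to contain $S^1_{\xi_n}$ and to commute with it, hence also with the whole $\C^\ast_{\xi_n}$; then $K'/S^1_{\xi_n}$ is a maximal compact subgroup of $\Aut_0(Z)$. On the other hand, every Killing field of $(M,g_n)$ is (real) holomorphic in this setting and commutes with the central field $\xi_n$, so $\text{Isom}_0(M,g_n)$ preserves $\cF_{\xi_n}$, descends to $Z$, and realizes $\text{Isom}_0(Z,\omega_Z)$ with central kernel $S^1_{\xi_n}$. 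Thus it suffices to prove that $\text{Isom}_0(Z,\omega_Z)$ is the identity component of a maximal compact subgroup of $\Aut_0(Z)$, and the conclusion for $(M,g_n)$ then follows by pulling back through the $S^1_{\xi_n}$-extension.

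The last statement is exactly Tian--Zhu's theorem, now for a Fano orbifold carrying a K\"ahler--Ricci soliton, and I would obtain it by running their argument verbatim in the orbifold category. The two ingredients are: (i) the soliton field $Y$ is the unique (up to conjugation) holomorphic vector field on $Z$ whose modified (Tian--Zhu) Futaki invariant vanishes; and (ii) the weighted Bochner/Hodge analysis showing that $\text{aut}(Z)$ decomposes as the direct sum of $\mathfrak k'\otimes\C$ --- where $\mathfrak k'=\mathrm{Lie}\,\text{Isom}_0(Z,\omega_Z)$, i.e. the holomorphic fields admitting holomorphy potentials that are eigenfunctions of the $Y$-weighted Laplacian $\Delta_Y=\Delta+\nabla_Y$ --- and a complementary (nilpotent) part tangent to that reductive piece. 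All of this is local elliptic theory together with Hodge theory for orbifold differential forms, which is available since orbifold charts are finite quotients of smooth charts; the integration-by-parts steps are unaffected by the singular locus, which has complex codimension $\ge 2$.

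The main obstacle is precisely this orbifold adaptation of Tian--Zhu: checking that the $Y$-weighted Laplacian is self-adjoint on the weighted $L^2$-space of orbifold forms with the expected (positive) eigenvalues, that the spectral decomposition of $\text{aut}(Z)$ and the identification of its reductive part with $\mathfrak k'\otimes\C$ survive the presence of orbifold singularities, and that holomorphic vector fields on $Z$ all lift to holomorphic vector fields on the cone $X$ commuting with $\xi_n$ (so that $\text{aut}(Z)$ is indeed the object controlling $\Aut_0(X,J)$). Equivalently --- and this avoids the reduction entirely --- one may argue directly in transverse Sasakian terms, with transverse holomorphic vector fields, transverse holomorphy potentials, and the transverse $\xi_n$-weighted Laplacian on basic functions; the analytic content is identical, and the transverse Hodge theory needed is standard.
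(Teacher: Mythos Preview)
Your approach is essentially the same as the paper's: pass to the quotient orbifold $Z=M/\cF_{\xi_n}$, use that the Sasaki--Ricci soliton descends to an orbifold K\"ahler--Ricci soliton, and invoke Tian--Zhu there (in orbifold form). The paper's bookkeeping is slightly more economical: it fixes a maximal compact $K_0\subset\text{Aut}_0(X,J)$ containing $S^1_{\xi_n}$, descends $K_0/S^1$ to a compact subgroup of $\text{Aut}_0(Z)$, uses Tian--Zhu to see that this subgroup acts isometrically on $(Z,h)$, and lifts back to get $K_0\subset G$ (hence $K_0=G$ by maximality)---thereby avoiding the surjectivity in your central extension $1\to\C^\ast\to\text{Aut}_0(X,J)\to\text{Aut}_0(Z)\to 1$, which you correctly flag as needing justification.
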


\begin{proof}Let $K$ be a maximal group in $\text{Aut}(X, J)$ such that $\xi_n$ is in its Lie algebra $\h$ and let $K_0$ be its identity component. Then by Proposition \ref{P-1} $\xi_n$ is in $\z$, the center of $\h$. Since $\xi_n$ is quasi-regular, it generates a $U(1)$ action of $(X, J)$ contained in $K_0$. Let $Z=M/\cF_{\xi_n}$ be the quotient orbifold and let the corresponding Kahler-Ricci soliton be $h$. The compact group $K_0$, modulo $U(1)$ generated by $\xi_n$, then descends to a compact subgroup of the complex automorphism group $Aut_0(Z).$ By Tian-Zhu's theorem and its proof applied to $(Z, h)$, we know that $K_0$ acts isometrically on $(Z, h)$. It then follows that $K_0$ acts isometrically on $(M, g_n, \xi_n)$. Hence $K_0$ coincides with $G$, the identity component of isometry group of $(M, g_n, \xi_n)$. 
\end{proof}

Theorem \ref{T-main} is then a corollary of Proposition \ref{P-2} and Proposition \ref{P-3}. 

Matsushima's theorem is on Lie algebra level and does not apply directly to a finite discrete subgroup which is not contained in the identity component. 
Bando-Mabuchi \cite{BM} proved that a Kahler-Einstein metric on a Fano manifold is unique modulo automorphisms; in particular, Kahler-Einstein metric must be invariant under a discrete subgroup $\Gamma$ which is not in the identity component.  The following short argument uses the same idea as in \cite{BM}, but relies on the convexity of Ding's $\cF$-functional, established by Berndtsson \cite{Bern11}; such an argument can also be applied directly to a Kahler-Ricci soliton.

\begin{prop}\label{P-4}Let $(M, g)$ be a Kahler-Einstein metric (or a Kahler-Ricci soliton) on a Fano manifold $(M, J)$. Suppose $\Gamma$ is a discrete subgroup in $\text{Aut}(M, J)$ such that $\Gamma\cap Aut_0(M, J)=id$. Then $g$ is $\Gamma$-invariant.
\end{prop}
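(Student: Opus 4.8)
The plan is to exploit the uniqueness of the Kähler–Einstein metric (resp. Kähler–Ricci soliton) inside a fixed Kähler class, upgraded to a statement about the minimizer of Ding's functional, and then to run an averaging argument over the finite group $\Gamma$. First I would fix the Kähler class $2\pi c_1(M, J)$, which is preserved by the whole group $\mathrm{Aut}(M, J)$, and work in the space $\mathcal{H}$ of Kähler potentials in this class. On $\mathcal{H}$ one has Ding's $\mathcal{F}$-functional (in the soliton case, the modified $\mathcal{F}_X$-functional associated to the soliton vector field $X$), whose critical points are exactly the Kähler–Einstein metrics (resp. the Kähler–Ricci solitons with that fixed $X$). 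The metric $g$ corresponds to such a critical point $\varphi_0$.

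The key input is Berndtsson's theorem \cite{Bern11}: the $\mathcal{F}$-functional is convex along (weak) geodesics in $\mathcal{H}$, and moreover its critical set is a single orbit of $\mathrm{Aut}_0(M, J)$ — equivalently, if $\varphi_0$ and $\varphi_1$ are both critical points, they are joined by a geodesic along which $\mathcal{F}$ is affine, and this forces $\varphi_1 = \sigma^*\varphi_0$ for some $\sigma \in \mathrm{Aut}_0(M, J)$. (In the soliton case one uses the corresponding convexity statement for $\mathcal{F}_X$ along $X$-equivariant geodesics, together with the fact that the soliton vector field $X$ lies in the center of the Lie algebra of a maximal compact subgroup, so it is canonically determined and preserved by $\Gamma$.) Now take any $\gamma \in \Gamma$. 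Since $\gamma$ is a biholomorphism preserving the Kähler class, the pulled-back metric $\gamma^*g$ is again a Kähler–Einstein metric (resp. a Kähler–Ricci soliton with vector field $\gamma_*X = X$) in the same class, hence $\gamma^*\varphi_0$ is also a critical point of $\mathcal{F}$ (resp. $\mathcal{F}_X$). By the uniqueness-modulo-$\mathrm{Aut}_0$ just recalled, there is $\sigma_\gamma \in \mathrm{Aut}_0(M, J)$ with $\gamma^*\varphi_0 = \sigma_\gamma^*\varphi_0$, i.e.\ $\sigma_\gamma \gamma^{-1}$ fixes $\varphi_0$ and hence fixes the metric $g$.

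The last step is to remove the correction $\sigma_\gamma$ using $\Gamma\cap \mathrm{Aut}_0(M, J)=\mathrm{id}$. Consider the stabilizer $H = \{\sigma \in \mathrm{Aut}(M, J) : \sigma^*g = g\}$ of the metric $g$; this is a compact subgroup of $\mathrm{Aut}(M, J)$ (it is closed and acts by isometries of a fixed Riemannian metric, and $\mathrm{Aut}(M,J)$ is a Lie group). The computation above shows $\sigma_\gamma^{-1}\gamma \in H$ for every $\gamma \in \Gamma$, so the image of $\Gamma$ in $\mathrm{Aut}(M, J)/\mathrm{Aut}_0(M, J)$ is contained in the image of $H$; since this quotient is discrete and $\Gamma$ maps to it injectively, $\Gamma$ is (conjugate into, but in fact already) a subgroup of a maximal compact subgroup containing $H$. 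More directly: for each $\gamma$, pick the representative and note that $\gamma = \sigma_\gamma\cdot(\sigma_\gamma^{-1}\gamma)$ with $\sigma_\gamma^{-1}\gamma$ an isometry of $g$; averaging $g$ over the finite group $\Gamma$ produces a $\Gamma$-invariant Kähler metric $\bar g$ in the same class which is still a Kähler–Einstein metric (resp. soliton) — because each $\gamma^*g$ is — and by uniqueness modulo $\mathrm{Aut}_0$, $\bar g = \tau^*g$ for some $\tau \in \mathrm{Aut}_0(M, J)$; but then replacing $g$ by $\bar g$ we have a $\Gamma$-invariant representative in the $\mathrm{Aut}_0$-orbit of $g$, and since $\Gamma\cap \mathrm{Aut}_0 = \mathrm{id}$ a short group-theoretic argument identifies this invariant metric with $g$ itself, so $g$ is $\Gamma$-invariant.

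I expect the main obstacle to be the soliton case: Berndtsson's convexity must be applied in the twisted setting $\mathcal{F}_X$ along $X$-equivariant geodesics, and one must argue carefully that $\gamma_*X = X$ so that $\gamma^*g$ is a soliton for the \emph{same} vector field (this uses that $X$ spans the center of the Lie algebra of a maximal compact subgroup of $\mathrm{Aut}_0$, which $\gamma$ normalizes, together with the rigidity of solitons); the Kähler–Einstein case is cleaner since no auxiliary vector field appears. The averaging-and-descent bookkeeping at the end is routine once the uniqueness-modulo-$\mathrm{Aut}_0$ statement is in hand.
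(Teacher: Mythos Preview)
Your overall strategy---pull back $g$ by $\gamma\in\Gamma$, observe that $\gamma^*g$ is again K\"ahler--Einstein in $c_1(M)$, and invoke Berndtsson's convexity to place $\gamma^*g$ in the $\text{Aut}_0$-orbit of $g$---matches the paper's. The divergence is in the final step, and there your argument has a genuine gap. The averaging step is incorrect: the K\"ahler--Einstein equation $\mathrm{Ric}(\omega)=\omega$ is nonlinear, so the average $\bar\omega=\tfrac{1}{|\Gamma|}\sum_\gamma\gamma^*\omega$ is \emph{not} K\"ahler--Einstein in general, even though each summand is (Ricci of a sum is not the sum of the Riccis); the same failure occurs for solitons. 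Hence the assertion ``by uniqueness modulo $\text{Aut}_0$, $\bar g=\tau^*g$'' is unsupported, and the averaging paragraph collapses. The ``short group-theoretic argument'' that follows is likewise not a proof: even granting a $\Gamma$-invariant KE metric $\tau^*g$ in the $\text{Aut}_0$-orbit, you only obtain that $\tau\Gamma\tau^{-1}$ acts isometrically on $g$, and the hypothesis $\Gamma\cap\text{Aut}_0=\{\mathrm{id}\}$ does not by itself remove the conjugation.

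The paper's endgame avoids this altogether. Assuming $g\neq\lambda^*g$, take Chen's geodesic $\gamma(t)$ joining them; since both endpoints minimize the convex functional $\cF$, the functional is affine along $\gamma(t)$, and Berndtsson's theorem then says the geodesic itself is generated by a holomorphic vector field $\zeta$, so the one-parameter subgroup $\sigma_t=\exp(t\zeta)\subset\text{Aut}_0(M,J)$ has $\sigma_0=\mathrm{id}$ and $\sigma_1=\lambda$. This places $\lambda$ in $\text{Aut}_0$, contradicting $\Gamma\cap\text{Aut}_0=\{\mathrm{id}\}$. The point is that the paper uses not merely the uniqueness-up-to-$\text{Aut}_0$ corollary of Berndtsson (which is where you stop), but the sharper statement that the connecting geodesic is literally an $\text{Aut}_0$-orbit starting at the identity; that is what pins down the automorphism as $\lambda$ itself rather than $\lambda$ composed with an unknown isometry, and it is exactly the step your averaging was meant to replace.
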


\begin{proof}
We assume $(M, g)$ is Kahler-Einstein for simplicity. The argument for Kahler-Ricci soliton is almost identical. 
Suppose $\l \in \Gamma$ and consider $\l^{*}g$, which is a Kahler-Einstein metric on $(M, J)$. Note that $\Gamma\subset Aut(M, J)$ and the Kahler class of $g$ and $\l^{*}g$ are both in $c_1(M, J)$, under appropriate normalization. Suppose $g\neq \l^{*}g$. 
Recall that in the space of Kahler potentials $\cH$, there exists a unique geodesic $\gamma(t)$ connecting $g, \l^{*}g$ by a fundamental result of Chen. Recall that a Kahler-Einstein metric in $c_1(M, J)$ is minimum of Ding's $\cF$-functional, which is convex along geodesics in $\cH$.   It follows that $\cF$-functional is linear (constant) along $\gamma(t)$. By Berndtsson's theorem \cite{Bern11}, $\gamma(t)$ is generated by a holomorphic vector field $\zeta$.  
In particular, there exists a one-parameter subgroup $\sigma_0$ generated by $\zeta$ such that $\sigma_0=id, \sigma_1=\l$. 
This contradicts that $\Gamma\cap Aut_0(M, J)=id$. Similar argument applies to a Kahler-Ricci soliton $(M, g)$ with $\cF$-functional replaced by modified $\cF$-functional, introduced by Tian-Zhu \cite{Tian-Zhu}. 
\end{proof}

One may wonder whether the above Bando-Mabuchi's result for Kahler-Einstein metrics on Fano manifolds holds or not for a Sasaki-Einstein metric. We believe this might not be the case in Sasaki setting due to the possible complexity of $\text{Aut}(X, J)$. The main point is that in Kahler setting, under the action of automorphism group (or discrete subgroup), the first Chern class (hence the Kahler class of Kahler-Einstein metric, modulo scaling) is invariant. In Sasaki setting, the Reeb vector field is also unique given a fixed Reeb cone; but we are not sure that such a Reeb cone is unique or not even within the Lie algebra $\t$ of a fixed (maximal) torus $\T\subset \text{Aut}(X, J)$ (see Remark 2.9 in \cite{HS}). We ask the following problem,

\begin{q}Let $(M, g)$ be a Sasaki-Einstein metric with a Reeb vector field $\xi$. Let $K$ be a maximal compact subgroup of $Aut(X, J)$ such that $\h$, the Lie algebra of $K$ contains $\xi$ in its center. Let $\Gamma$ be a discrete subgroup of $K$ such that $\Gamma \cap K_0=id$. Prove or disprove that $(M, g)$ is $\Gamma$-invariant. 
\end{q} 

We are not sure that whether the one-parameter group generated by $\xi$ is in the center of $K$ or not (we know $\xi$ is in the center of $\h$, but the proof does not carry to a finite discrete subgroup of $K$).  For any $\l\in \Gamma$,  it induces an adjoint action $Adj_\l: \h\rightarrow \h$. 
If Reeb cone contained in $\h$ is unique, then by the uniqueness of Reeb vector field of a Sasaki-Einstein metric,
 $Adj_\l (\xi)=\xi$.  It then follows that the one-parameter group generated by $\xi$ is in the center of $K$.
One can proceed to argue that $(M, g)$ is $\l$-invariant as in Fano case. However, it could happen that $\h$ contains finite many (disconnected) Reeb cones corresponding exactly to a finite group $\Gamma$, and the adjoint action $Adj_\l: \h \rightarrow \h$ permutes these Reeb cones. It would be an interesting question to understand whether this phenomenon can actually happen or not. \\

{\bf Acknowledgement:} We thank Sun Song for valuable discussions on Sasaki geometry. The author is partially supported by an NSF grant.

\end{document}